\newtheorem{thm}{Theorem}[section]
\newtheorem{prop}[thm]{Proposition}
\newtheorem{lem}[thm]{Lemma}
\newtheorem{rmk}{Remark}
\newcommand{\ind}{\mathbbm{1}}
\newcommand{\maxload}{\mathrm{MaxLoad}}
\newcommand{\cho}{\mathrm{Chosen}}
\newcommand{\N}{\mathbb{N}}
\newcommand{\E}{\mathbb{E}}
\def\P{{\mathbb P}}
\begin{document}
\title{Load balancing under  $d$-thinning
\thanks{Research supported by ISF grant 1707/16}
}
\author{Ohad N. Feldheim,
Jiange Li \thanks{Both authors are with the Einstein Institute of Mathematics at the Hebrew University of Jerusalem.
E-mail: {\tt ohad.feldheim@mail.huji.ac.il,  jiange.li@mail.huji.ac.il}.}
}
\date{\today}
\maketitle

\begin{abstract}
In the classical balls-and-bins model, $m$ balls are allocated into $n$ bins one by one uniformly at random.
In this note, we consider the $d$-thinning variant of this model, in which the process is regulated in an on-line fashion as follows. For each ball, after a random bin has been selected, an overseer may decide, based on all previous history, whether to accept this bin or not. However, one of every $d$ consecutive suggested bins must be accepted. The \emph{maximum load} of this setting is the number of balls in the most loaded bin.
We show that after $\Theta(n)$ balls have been allocated, the least maximum load achievable with high probability is $(d+o(1))\sqrt[d]{\frac{d\log n}{\log\log n}}$. This should be compared with the related $d$-choice setting, in which the optimal maximum load achievable with high probability is $\frac{\log\log n}{\log d}+O(1)$.

\end{abstract}

\section{Introduction}
In the classical \emph{balls-and-bins} model, $m$ balls are sequentially allocated into $n$ random bins, each selected uniformly and independently at random. This model has been extensively studied in probability theory, random graph theory, and computer science, and it has found many applications in various areas, such as hashing, load balancing, and resource allocation in parallel and distributed systems (see  e.g., \cite{ABKU99}, \cite{KLM96}, \cite{Kor97}, \cite{SEK03}, \cite{Ste96}).

The \emph{maximum load} is defined as the number of balls in the fullest bin. When $m\geq n\log n$, the maximum load is known to be of $\frac{m}{n}+\Theta\Big(\sqrt{\frac{m\log n}{n}}\Big)$ with high probability (see, e.g., \cite{RS98}). In \cite{ABKU99}, Azar, Broder, Karlin and Upfal studied the \emph{two-choice} variant of the balls-and-bins model, in which for the allocation of each ball, an overseer has the freedom of making a choice between two bins, each selected independently and uniformly at random. They showed that for $m=\Theta(n)$, the greedy strategy of selecting the less loaded bin yields, with high probability, an optimal maximum load of $\frac{\log\log n}{\log 2}+O(1)$, an exponential improvement over the classical balls-and-bins model. Since then, this surprising phenomenon has found many algorithmic applications (see e.g. \cite{M01} and references therein). 

In~\cite{DFGR}, with motivations from statistics, Dwivedi, the first author, Gurel-Gurevich and Ramdas considered the following {\em two-thinning} variant of the balls-and-bins model. In this setting, for each ball, after a uniformly random \emph{primary} allocation has been suggested, an overseer may choose to either accept it, or allocate the ball into a new independent and uniformly random \emph{secondary} allocation. Note that the overseer is oblivious to the secondary allocation before deciding whether to accept the primary allocation or not. Hence, the overseer has less choice in this variant than in the two-choice setting. Equivalently, the two-thinning setting could be described as allowing the overseer to discard allocations on-line as long as no two consecutive allocations are rejected (thus justifying the term two-thinning).
This setup arises naturally in a statistical scenario in which one collects samples one-by-one and is allowed to decide whether to keep each sample or not, under the constraint that no two consecutive samples are discarded.
In~\cite{DFGR}, the authors showed that the power of two-thinning could reduce the discrepancy of a sequence of random points independently and uniformly chosen from the interval $[0,1]$ to be near optimal. The authors also show that a  weaker $(1+\beta)$-thinning setting, where discarding is allowed with probability $\beta<1$ independently for every sample whose predecessor was not discarded, also obtains near optimal discrepancy.

In~\cite{FGG18}, the first author and Gurel-Gurevich show that by setting a threshold and accepting the primary allocation if a ball is assigned to a bin in which the number of balls that already accepted their primary allocations is below this threshold, the overseer can achieve, with high probability, an optimal maximum load of $(2+o(1))\sqrt{\frac{2\log n}{\log\log n}}$, a polynomial improvement over the one-choice setting. Observe that the two-choice setting with errors (i.e., with constant probability of placing a ball into the unselected bin) and the $(1+\beta)$-thinning setting provide no asymptotic improvement over the one-choice setting (this follows from \cite{PTW15}, see also \cite{DFGR}).

In  \cite{ABKU99}, the $d$-choice setting, where the overseer is given $d>2$ choices, was also considered. In this setting, the greedy algorithm yields an optimal  maximum load of $\frac{\log\log n}{\log d}+O(1)$ with high probability; that is, compared with the case $d=2$, the performance only improves by a multiplicative factor for larger values of $d$. 

This paper investigates the analogous \emph{$d$-thinning} setting for $d>2$, in which the overseer is allowed to reject up to $d-1$ consecutive allocations for each ball  (see an equivalent description in the abstract).
Our result is about the recovery and analysis of the optimality of the $\ell$-threshold strategy. This strategy rejects the $i$-ary allocation of a ball if  this ball is assigned to a bin which already contains $\ell$ balls whose $i$-ary allocations have been accepted. More details on the $\ell$-threshold strategy and its optimality are given in the next section. Our main result is the following.
\begin{thm}\label{thm:main}
Let $f$ be the $\Big(\frac{d\log n}{\log\log n}\Big)^{1/d}$-threshold strategy for the $d$-thinning of $\lfloor\rho n\rfloor$ balls into $n$ bins. Then $f$ is asymptotically optimal, and with high probability yields
$$\maxload^f(\lfloor\rho n\rfloor)=(d+o(1))\left(\frac{d\log n}{\log\log n}\right)^{1/d}.$$
\end{thm}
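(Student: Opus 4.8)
The plan is to analyze the $\ell$-threshold strategy with $\ell = (d\log n/\log\log n)^{1/d}$ (ignoring the rounding, which costs only lower-order terms) directly, in two halves: first an upper bound showing $\maxload^f(\lfloor\rho n\rfloor) \le (d+o(1))\ell$ with high probability, and then a matching lower bound showing that no $d$-thinning strategy can do asymptotically better, so that $f$ is optimal. For the upper bound, the key object is the number $Z_j$ of bins that, at the end of the process, contain exactly $j$ accepted allocations, or rather the "excess" bins with at least $j$ balls. Under the $\ell$-threshold rule, a ball lands in a bin and is \emph{rejected} precisely when that bin already holds $\ell$ balls at the time of suggestion, and the ball is only truly placed beyond level $\ell$ after $d$ consecutive suggestions all hit bins at level $\ge \ell$. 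The first step is to bound $N_{\ell+1}$, the number of balls accepted into bins already at height $\ell$: since a bin reaches level $\ell$ only after $\rho n$ balls have been distributed, a crude union/Chernoff bound shows $N_{\ell+1}$ is at most roughly $\rho n \cdot (\text{fraction of bins at level} \ge \ell)^{d}$ in expectation, and by the threshold choice the fraction of bins at level $\ge \ell$ is itself controlled: $\E[\#\{\text{bins at level}\ge \ell\}] \approx n \cdot \frac{(\rho)^\ell}{\ell!}$ by a Poisson/subcritical-branching heuristic, which by Stirling and the choice of $\ell$ is $n^{1-1+o(1)}$; iterating this "one more level costs a $d$-th power of a vanishing fraction" recursion, one finds that after $\log\log n/\log d + O(1)$ extra levels — equivalently after multiplying $\ell$ by roughly $d$ — the expected number of bins reaching that height drops below $1$, giving the height bound $(d+o(1))\ell$. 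A first-moment (union bound over bins) argument then upgrades this to a with-high-probability statement.

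For the lower bound / optimality, the plan is the standard witness-tree or recursive majorization argument adapted to thinning. One shows that \emph{any} $d$-thinning strategy, at the moment it has placed $\lfloor \rho n\rfloor$ balls, must have left many bins at moderate heights simply because the overseer has limited power: to keep a ball out of a set $S$ of "low" bins it needs $d$ consecutive misses of $S$, an event of probability $(|S|/n)^d$ bounded below when $S$ is a constant fraction of the bins, so $\Omega(n)$ balls are forced into $S$ regardless of strategy; recursing this statement — at each level the "forced" mass into the set of bins already at that level is a $d$-th power of the previous fraction — produces, with high probability, a bin of height at least $(d-o(1))\ell$. This is most cleanly done by coupling with, or comparing to, the threshold strategy itself and invoking a layered first/second-moment computation on the number of bins at each height; the optimality of threshold strategies among all strategies (alluded to in the paragraph preceding the theorem) should be quotable or re-derivable by an exchange/interchange argument showing a greedy threshold is extremal for the maximum-load objective.

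The main obstacle, I expect, is making the recursive "$d$-th power per level" estimate rigorous rather than heuristic, because the events "$d$ consecutive suggested bins are all at height $\ge j$" are not independent across balls and the set of height-$\ge j$ bins evolves as the process runs; the Poisson approximation $(\rho^j/j!)$ is only valid in the regime where that set is small, and one must track the transition carefully near the critical level where $n \cdot \rho^\ell/\ell! = \Theta(1)$. I would handle this by a stopping-time/martingale argument: define the level $\ell^\star$ at which the expected count first drops below, say, $n^{\epsilon}$, control everything below $\ell^\star$ by moment bounds on sums of (weakly dependent, hence Azuma-able) indicator contributions, and above $\ell^\star$ switch to a pure union bound where rare events can be summed directly. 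Matching the constant $d$ in front — as opposed to some other constant — requires being careful that the $\log\log n/\log d$ extra levels beyond $\ell$ multiply $\ell$ by exactly $d(1+o(1))$, which follows because $\ell = (d\log n/\log\log n)^{1/d}$ is chosen precisely so that $\ell^{d} = d\log n/\log\log n$ and one further level past $\ell$ multiplies the relevant exponent by a factor tending to the ratio governing the geometric-type recursion; verifying this bookkeeping is the delicate quantitative heart of the argument.
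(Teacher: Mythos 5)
Your plan misreads the strategy being analyzed, and the mechanism you propose for the upper bound is not the one that makes the constant $d$ come out. The paper's $\ell$-threshold strategy thresholds the \emph{per-round} counts $L^f_{i,m}$ (balls accepted by bin $m$ as $i$-ary allocations), not the total load of the bin. This is what drives the upper bound: rounds $1,\dots,d-1$ contribute at most $\ell$ to every bin \emph{deterministically}, and the only probabilistic work is to control the number $r_d$ of balls surviving to the forced $d$-th round, via the recursion $\beta_i/n\approx(2/\ell!)\,(\beta_{i-1}/n)^{\ell}$ (an $\ell$-th power per \emph{round}, coming from Poissonization of the overflow above level $\ell$), which gives $\beta_d=n^{1-o(1)}$, and then a Poisson/union bound shows these uniformly placed balls create at most $(1+\epsilon)\ell$ extra load. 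Your ``one more level costs a $d$-th power of the fraction of bins at that level'' recursion does not describe the process: once a ball has exhausted its rejections, its $d$-ary allocation is uniform and forced, so there is no repeated $d$-th-power gain level by level. Your arithmetic is also internally inconsistent: $\log\log n/\log d+O(1)$ extra levels is an additive $O(\log\log n)$ term, not a multiplication of $\ell=(d\log n/\log\log n)^{1/d}$ by $d$; and $n\rho^{\ell}/\ell!$ is $n^{1-o(1)}$, not $n^{o(1)}$. Indeed, for the total-load threshold you describe, a back-of-the-envelope count shows roughly $n^{1-o(1)}$ forced balls landing uniformly, which can pile up to order $\ell^{d-1}/(d-1)\gg\ell$ above the threshold when $d\ge3$, so your version of the strategy is not even clearly within the claimed bound; the per-round threshold is essential.

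The lower bound is where the genuine missing idea lies. Your witness/forced-mass argument grants the overseer, for \emph{every} level set $S$ simultaneously, the ability to avoid $S$ unless all $d$ suggestions fall in $S$, an event of probability $(|S|/n)^d$. Recursing this per level gives fractions decaying like $p_{k+1}\approx\rho p_k^d$, i.e.\ doubly exponentially, which becomes trivial after about $\log\log n/\log d$ levels: this is exactly the $d$-choice bound, far below $(d-\epsilon)\ell$, and such an argument cannot distinguish $d$-thinning from $d$-choice because it implicitly lets the overseer spend its full rejection power at every level at once. The real constraint is that each ball has a budget of $d-1$ rejections, spent sequentially and without seeing future suggestions, so the overseer cannot protect $d$ nested level sets simultaneously. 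The paper captures this with the induced strategies $f^j$ (the $(d-j+1)$-thinning strategy acting on the balls whose first $j-1$ suggestions were rejected) and the reduction Lemma~\ref{lem:reduction}: in each stage, either many primary suggestions landing in the current level set are accepted (so the level set survives one level higher), or many are rejected, in which case the induced $(d-1)$-thinning strategy must allocate many balls over that set with a correspondingly reduced target height; iterating $d-1$ times reduces everything to $1$-thinning, i.e.\ plain balls-in-bins on a still large set, where Lemma~\ref{lem:max-bound} supplies the last $\approx\ell$ levels. Each reduction costs only about $\ell$ levels, and that is where the factor $d$ in $(d-\epsilon)\ell$ comes from. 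There is no quotable ``threshold strategies are extremal'' result to lean on --- establishing optimality against arbitrary adaptive strategies is precisely the hard part --- so as written your lower-bound plan does not reach the stated bound.
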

Clearly, the exponent of the maximum load in the $d$-thinning setting keeps improving as $d$ grows, but it is never asymptotically better than that in the two-choice setting.

Interestingly, our threshold strategy was used by Adler, Chakrabarti, Mitzenmacher and
Rasmussento \cite{ACMR98} to obtain a similar tight upper bound (up to a constant factor) in the related model of parallel allocation (which is neither stronger nor weaker than the setting considered here). 
Nevertheless, we provide here a finer analysis of the threshold strategy and achieve an upper bound which is tight up to the $1+o(1)$ factor.

The more challenging part is to prove the matching lower bound on $\maxload^f(\lfloor\rho n\rfloor)$, which establishes the optimality of the $\Big(\frac{d\log n}{\log\log n}\Big)^{1/d}$-threshold strategy.  This is achieved in Section~\ref{sec: lower}, not by directly extending the argument in \cite{FGG18}, but rather via
a reduction lemma, which reduces a $d$-thinning problem to a $(d-1)$-thinning problem. After $d-1$ rounds of iteration, the original $d$-thinning problem is reduced to a one-thinning problem, which is an easy to handle problem in the classical balls-and-bins model.

\section{Notation and definitions}

\textbf{Representation of the setting.}
Let $\{Z_1(r)\}_{r\in\N}$,$\cdots$,$\{Z_d(r)\}_{r\in\N}$ be $d$ sequences of independent random variables, which are uniformly distributed on the set $[n]:=\{1, 2, \cdots, n\}$. Here $\{Z_1(r)\}_{r\in\N}$ is used as a pool of primary allocations,
$\{Z_2(r)\}_{r\in\N}$ -- as a pool of secondary allocations and, in general,
$\{Z_d(r)\}_{r\in\N}$ -- as a pool of $d$-ary allocations.
Elements are drawn from these pools one by one as needed, so that if the overseer accepts the primary allocation $Z_1(1)$ of the first ball and rejects the primary allocation $Z_1(2)$ of the second ball, then the bin $Z_2(1)$ is offered as the secondary allocation for the second ball.

\textbf{A $d$-thinning strategy.} A {\em $d$-thinning strategy} $f$ is a sequence of Boolean functions $f_{t,i}:[n]^{t}\to \{0,1\}$ with $i\in [d-1]$, which takes the final allocations of the first $t-1$ balls and the $i$-ary allocation of the $t$-th ball, and returns $0$ if this $i$-ary allocation is accepted and $1$ if it is rejected. We also allow $f_{t,i}$ to implicitly depend on any external, potentially random data, as long as this data is independent from all future allocations. For completion we also define $f_{t, d}\equiv 0$, which corresponds to the fact that in our setting, the $d$-ary allocation cannot be rejected. Let $\{Z(t)\}_{t\in\N}$ denote final allocations assigned by $f$.
%
%
%

Towards obtaining a formal description of $Z(t)$, set $r_1(t)=t$ and for $i\in\{2, \cdots, d\}$ inductively define
\begin{equation}\label{eq:r_i}
r_i(t)=\sum_{k=1}^t \prod_{j=1}^{i-1}f_{k,j}(Z(1),\dots,Z(k-1),Z_j(r_j(k))).
\end{equation}
Thus, $r_i(t)$ represents how many of the first $t$ balls have their first $i-1$ rounds of allocations rejected.  For $i\in[d]$ and $r\in\N$, we define
\begin{equation}\label{eq:t_i(r)}
t_i(r)=\min\{t\in\N: r_{i}(t)=r\},
\end{equation}
which is the unique time at which $Z_{i}(r)$ is used as an $i$-ary allocation. For $t\in\N$, the function $\cho(t)$ determines which round allocation is accepted for the $t$-th ball, i.e.,
$$
\cho(t)=\min\big\{i\in[d]: f_{t,i}(Z(1),\dots,Z(t-1),Z_i(r_i(t)))=0\big\}.
$$
The \emph{final allocation} assigned by $f$ is now defined via
$$
Z(t)=Z_{\cho(t)}(r_{\cho(t)}(t)).
$$

\textbf{Induced thinning strategy.}
For $j\in[d]$, let  $f^j$ be the $(d-j+1)$-thinning strategy induced by $f$ as follows
\begin{equation}\label{fj def}
f^j_{r,i}=f_{t_j(r),i+j-1}.
\end{equation}
Particularly, we have $f^1=f$. The thinning strategy  $f^j$ accepts or rejects the $i$-ary allocation of the $r$-th ball (with respect to $f^j$) according to the decision of $f$ on the $(i+j-1)$-ary allocation of the $t_j(r)$-th ball (with respect to $f$), i.e., the $r$-th ball whose first $j-1$ allocations we rejected.
Observe that the $(d-j+1)$-thinning strategy $f^j$, relies on information concerning the first $j-1$ rounds of allocations assigned by $f$. This is still a valid strategy as these
allocations are independent of future allocations from $\{Z_k(r_k(t))\}_{k> j}$ and $\{Z_k(r)\}_{k\in [d],r> r_k(t)}$.

We also introduce the operation
\begin{equation}\label{eq:fplus def}
f^+=f^2,
\end{equation}
and observe that for all $j\in[d-1]$ we have $(f^j)^+=f^{j+1}$.

\textbf{Load analysis notation.} For $m\in[n]$ and $t\in\N$,  the load of bin $m$ after $t$ balls have been allocated is defined as
\begin{align}\label{eq:load-m}
L_m^f(t)=|\{1\leq k\leq t: Z(k)=m\}|.
\end{align}
Let $L_{i, m}^f(r)$ be the number of balls that accept bin $m$ as their $i$-ary allocations after $t_i(r)$ balls were allocated, i.e.,
\begin{align}\label{eq:load-i-m}
L_{i, m}^f(r)=|\{1\leq k\leq t_i(r): \cho(k)=i, Z(k)=m\}|.
\end{align}
Clearly, we have $\sum_{i=1}^dL_{i,m}^f(r_i(t))= L_{m}^f(t)$. The maximum load over a subset $S\subseteq [n]$ after $t$ balls have been allocated using the strategy $f$ is defined as
\begin{align}\label{eq:max-load}
\text{MaxLoad}_S^f(t)=\max_{m\in S}L_m^f(t).
\end{align}
In addition, we define
\begin{align}\label{eq:super-level}
\phi_S^f(t)=\big|\big\{m\in S: L_m^f(t)>0\big\}\big|,
\end{align}
which is the number of non-empty bins in $S$ after $t$ balls were allocated, and
\begin{align}\label{eq:primary-super-level}
\psi_S^f(t)=|\{m\in S: \text{there exists}~ 1\leq k\leq t~ \text{such that} ~Z_1(k)=m\}|,
\end{align}
which counts the number of bins in $S$ offered as primary allocations in the process of allocating of $t$ balls. For simplicity, we drop the subscript $S$ when $S=[n]$.

\textbf{Asymptotic optimality.} We say that a $d$-thinning strategy $f$ is \emph{asymptotically optimal} if for any other $d$-thinning strategy $g$, with high probability, as $n$ tends to infinity we have
$$\text{MaxLoad}^f(t)\le (1+o(1))\cdot\text{MaxLoad}^g(t).$$

\textbf{Threshold strategy.} The $\ell$-threshold strategy for $\ell>0$ is the strategy $f$ that accepts an allocation when the suggested bin contains no more than $\ell$ balls that were allocated in the $i$-ary allocations round, i.e.,
$$
f_{t,i}\big(Z(1),\dots,Z(t-1),Z_i(r_{i}(t))\big)=\ind\left(L^f_{i, Z_i(r_{i}(t))}(r_{i}(t))> \ell\right),
$$
where $\ind(E)$ is the indicator function for the event $E$.

The following lemmata from \cite{FGG18} and \cite{MU05} will be of use in our analysis. The first is a comparison bound relating the balls-and-bins model to independent Poisson random variables. We denote by $\N_0=\N\cup\{0\}$. Given $x, y\in(\N_0)^n$, we say that $x\leq y$ if $x_i\leq y_i$ for all $i\in[n]$. A set $S\subset(\N_0)^n$ is called \emph{monotone decreasing (respectively, increasing)} if $x\in S$ implies that $y\in S$ for all $y\leq x$ (respectively, $x\leq y$).

\begin{lem}[\cite{MU05}, Theorem 5.10]\label{lem:poisson-bound}
Let $\{X_m\}_{m\in[n]}$ be the number of balls in bins $m\in[n]$ when $\lfloor\theta n\rfloor$ balls are independently and uniformly placed into $n$ bins. Let $\{Y_m\}_{m\in[n]}$ be independent Poisson random variables with parameter $\theta$. For any monotone set $S\subseteq[n]$, we have
$$
\P((X_1, \cdots, X_n)\in S)\leq 2\P((Y_1, \cdots, Y_n)\in S).
$$
\end{lem}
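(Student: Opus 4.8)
I would follow the classical Poissonization argument. Write $M=\lfloor\theta n\rfloor$ for the number of balls, let $X^{(k)}=(X^{(k)}_1,\dots,X^{(k)}_n)$ be the occupancy vector produced by throwing $k$ balls uniformly and independently into the $n$ bins, so that $(X_1,\dots,X_n)=X^{(M)}$, and let $Y=(Y_1,\dots,Y_n)$ be the independent Poisson vector of the statement. Since $\lfloor\theta n\rfloor/n=\theta+O(1/n)$, replacing the per-bin mean $\theta$ by $M/n$ changes nothing in the applications (where $\theta$ is fixed and $n\to\infty$), and I will do so, so that $N:=\sum_{m=1}^n Y_m\sim\mathrm{Poisson}(M)$. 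The first step is the structural identity that, conditionally on $\{N=k\}$, the vector $Y$ has exactly the law of $X^{(k)}$. This is a one-line computation: for any $(k_1,\dots,k_n)$ with $\sum_m k_m=k$, the ratio of $\P(Y=(k_1,\dots,k_n))$ to $\P(N=k)$ equals $\binom{k}{k_1,\dots,k_n}n^{-k}=\P(X^{(k)}=(k_1,\dots,k_n))$. Hence
$$
\P(Y\in S)=\sum_{k\ge 0}\P(N=k)\,\P(X^{(k)}\in S).
$$

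The second step uses monotonicity in the number of balls. Realising all the $X^{(k)}$ on one probability space by adding balls one at a time gives $X^{(0)}\le X^{(1)}\le X^{(2)}\le\cdots$ coordinatewise; so if $S$ is monotone increasing then $k\mapsto\P(X^{(k)}\in S)$ is non-decreasing, and if $S$ is monotone decreasing it is non-increasing. In the increasing case I restrict the sum above to $k\ge M$, where $\P(X^{(k)}\in S)\ge\P(X^{(M)}\in S)=\P((X_1,\dots,X_n)\in S)$, to obtain $\P(Y\in S)\ge\P((X_1,\dots,X_n)\in S)\,\P(N\ge M)$; in the decreasing case I restrict to $k\le M$ and obtain $\P(Y\in S)\ge\P((X_1,\dots,X_n)\in S)\,\P(N\le M)$.

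It remains only to note that $\P(N\ge M)\ge\tfrac12$ and $\P(N\le M)\ge\tfrac12$ for $N\sim\mathrm{Poisson}(M)$ with $M$ a positive integer — i.e.\ that such a Poisson law has $M$ as a median. This is the one genuinely non-routine ingredient; I would isolate it as an elementary lemma at the outset, proving it directly or quoting it (it is exactly what underlies \cite[Theorem 5.10]{MU05}, and also follows from the standard bounds comparing the median and the mean of a Poisson variable). Plugging it into the two cases yields $\P((X_1,\dots,X_n)\in S)\le 2\,\P(Y\in S)$ in all cases, which is the claim. Thus the main obstacle is not the overall scheme — Poissonization plus a monotone coupling is routine — but this Poisson-median inequality; once it is granted, the rest is mechanical.
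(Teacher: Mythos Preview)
The paper does not prove this lemma at all: it is quoted verbatim as Theorem~5.10 of \cite{MU05} and used as a black box, so there is no ``paper's own proof'' to compare against. Your argument is precisely the standard Poissonization proof from that reference --- condition the Poisson vector on its total, use the multinomial identity, couple the occupancy vectors $X^{(k)}$ monotonically in $k$, and finish with the Poisson median fact $\P(N\ge M),\,\P(N\le M)\ge\tfrac12$ for integer mean $M$ --- so it is correct and exactly the intended approach.

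The one point worth flagging is the parameter mismatch you already noticed: as stated here the $Y_m$ have mean $\theta$, so $N=\sum_m Y_m\sim\mathrm{Poisson}(n\theta)$ rather than $\mathrm{Poisson}(M)$, and $n\theta$ need not be an integer. Your fix of replacing $\theta$ by $M/n$ is what \cite{MU05} actually does, and it is harmless for every use of the lemma in this paper; but if you want the inequality \emph{exactly} as written you would need the slightly stronger fact that a $\mathrm{Poisson}(\lambda)$ variable satisfies $\P(N\ge\lfloor\lambda\rfloor)\ge\tfrac12$ and $\P(N\le\lfloor\lambda\rfloor)\ge\tfrac12$, which also holds (e.g.\ via the Chen--Rubin bounds on the Poisson median) but is a hair more delicate than the integer-mean case.
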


Two additional lemmata from \cite{FGG18} provide concentration bounds for the maximum load over a subset of bins and on  the number of bins with load above a certain level in the balls-and-bins process, respectively.

\begin{lem}[\cite{FGG18}, Lemma 2.2] \label{lem:max-bound}
Let $\{X_m\}_{m\in[n]}$ be the number of balls in bins $m\in[n]$ when  $\lfloor\theta n\rfloor$ balls are independently and uniformly placed into $n$ bins. For $k\in\lfloor\theta n\rfloor$ and $S\subseteq [n]$, we have
$$
\P\left(\max_{m\in S}X_m<k\right)\leq 2\exp\left(-\frac{\theta^k|S|}{ek!}\right).
$$
\end{lem}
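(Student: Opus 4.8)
This is a standard Poissonization estimate, so the proof plan is short. The starting point is that the event $\{\max_{m\in S}X_m<k\}$ is exactly $\{(X_1,\dots,X_n)\in\mathcal A\}$ for the configuration set $\mathcal A=\{x\in(\N_0)^n: x_m<k\ \text{for all}\ m\in S\}$, which is monotone \emph{decreasing}: if $x\in\mathcal A$ and $y\le x$ then $y_m\le x_m<k$ for every $m\in S$, so $y\in\mathcal A$. Hence Lemma~\ref{lem:poisson-bound} applies and yields
\[
\P\Big(\max_{m\in S}X_m<k\Big)\le 2\,\P\Big(\max_{m\in S}Y_m<k\Big),
\]
where $\{Y_m\}_{m\in[n]}$ are independent Poisson random variables with parameter $\theta$.

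Next I would use the independence of the $Y_m$'s. Since the events $\{Y_m<k\}$, $m\in S$, are independent and identically distributed,
\[
\P\Big(\max_{m\in S}Y_m<k\Big)=\prod_{m\in S}\P(Y_m<k)=\big(1-\P(Y_1\ge k)\big)^{|S|}.
\]
The only remaining input is an elementary lower bound on the Poisson upper tail, obtained by retaining just the $j=k$ term of the series and using $e^{-\theta}\ge e^{-1}$:
\[
\P(Y_1\ge k)=\sum_{j\ge k}e^{-\theta}\frac{\theta^j}{j!}\ \ge\ e^{-\theta}\,\frac{\theta^k}{k!}\ \ge\ \frac{\theta^k}{e\,k!}.
\]
Substituting this and applying the inequality $1-x\le e^{-x}$ gives $\P(\max_{m\in S}Y_m<k)\le\exp\!\big(-\theta^k|S|/(e\,k!)\big)$, and combining with the previous display finishes the proof. (The hypothesis $k\le\lfloor\theta n\rfloor$ plays no role beyond ensuring we are in the range where the bound is non-trivial; outside it the left side equals $1$ and the right side exceeds $1$.)

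There is no genuine obstacle here: the whole argument is a three-line chain once Lemma~\ref{lem:poisson-bound} is available. The two points that require a moment's care — and the closest thing to a difficulty — are: (i) verifying that $\mathcal A$ is monotone \emph{decreasing}, which is exactly the hypothesis needed to invoke Lemma~\ref{lem:poisson-bound}, so the argument would break if one were careless about the direction of monotonicity; and (ii) selecting the right single-bin estimate, namely the crude bound $\P(Y_1\ge k)\ge\theta^k/(e\,k!)$ obtained by keeping only the $j=k$ term, since it is precisely the form that survives the step $(1-x)^{|S|}\le e^{-x|S|}$ and reproduces the constant $e$ appearing in the statement; a sharper Poisson tail bound would only complicate the calculation without improving the conclusion.
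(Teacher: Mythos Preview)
Your argument is correct and is precisely the standard Poissonization proof; the present paper does not prove this lemma at all but merely quotes it from~\cite{FGG18}, so there is nothing to compare against here. One small point worth making explicit: the step $e^{-\theta}\ge e^{-1}$ tacitly assumes $\theta\le 1$, an assumption that is in fact necessary for the stated inequality to hold (for large $\theta$ the right-hand side becomes doubly exponentially small while the Poisson lower-tail probability does not) and that is satisfied in every application of the lemma within the paper.
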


\begin{lem}[\cite{FGG18}, Lemma 2.3]\label{lem:nonempty-bound}
Let $\{X_m\}_{m\in[n]}$ be the number of balls in bins $m\in[n]$ when  $\lfloor\theta n\rfloor$  balls are independently and uniformly placed into $n$ bins. For any $S\subseteq [n]$, we have
$$
\P\left(|\{m\in S: X_m>0\}|\leq \frac{\theta |S|}{2e}\right)\leq 2\exp\left(-\frac{\theta^2|S|}{2e^2}\right).
$$
\end{lem}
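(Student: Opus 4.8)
The plan is to reduce, via the Poisson comparison of Lemma~\ref{lem:poisson-bound}, to a standard lower-tail Chernoff estimate for a binomial random variable. Fix $S\subseteq[n]$, write $N=\lfloor\theta n\rfloor$, and note that we may and do assume $0<\theta\le1$, which is the only regime relevant to the applications of this lemma.

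First I would verify that the relevant event is monotone. Consider
$$\mathcal{A}=\Big\{x\in(\N_0)^n:\ \big|\{m\in S:\ x_m>0\}\big|\le \tfrac{\theta|S|}{2e}\Big\}.$$
If $y\le x$ then $\{m:y_m>0\}\subseteq\{m:x_m>0\}$, hence $\big|\{m\in S:y_m>0\}\big|\le\big|\{m\in S:x_m>0\}\big|$, so $x\in\mathcal{A}$ forces $y\in\mathcal{A}$; that is, $\mathcal{A}$ is monotone decreasing. Lemma~\ref{lem:poisson-bound} therefore yields
$$\P\big((X_1,\dots,X_n)\in\mathcal{A}\big)\le 2\,\P\big((Y_1,\dots,Y_n)\in\mathcal{A}\big),$$
where $Y_1,\dots,Y_n$ are i.i.d.\ $\mathrm{Poisson}(\theta)$. (Alternatively one could bypass Poissonization by invoking the negative association of the indicators $\{\ind(X_m>0)\}_{m\in[n]}$ in the balls-and-bins model, but the comparison lemma is already at hand.)

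Next I would analyze the Poisson side. For $m\in S$ put $B_m=\ind(Y_m>0)$; these are i.i.d.\ Bernoulli with parameter $p=1-e^{-\theta}$, and $\{(Y_1,\dots,Y_n)\in\mathcal{A}\}$ is exactly $\{\sum_{m\in S}B_m\le\theta|S|/(2e)\}$. Since $t\mapsto 1-e^{-t}$ is concave and vanishes at $0$, for $\theta\in(0,1]$ we have $p\ge(1-e^{-1})\theta$, so the mean $\mu:=p|S|$ obeys $\mu\ge(1-e^{-1})\theta|S|$; writing $\theta|S|/(2e)=(1-\delta)\mu$ gives $1-\delta=\tfrac{\theta|S|/(2e)}{p|S|}\le\tfrac1{2e(1-e^{-1})}<\tfrac12$, i.e.\ $\delta\ge\tfrac12$. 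The multiplicative Chernoff lower-tail bound for sums of independent $[0,1]$-valued variables (see \cite{MU05}) then gives
$$\P\Big(\sum_{m\in S}B_m\le(1-\delta)\mu\Big)\le e^{-\delta^2\mu/2}\le e^{-\mu/8}\le \exp\!\Big(-\tfrac{1-e^{-1}}{8}\,\theta|S|\Big)\le \exp\!\Big(-\tfrac{\theta^2|S|}{2e^2}\Big),$$
where the final inequality uses $\theta\le1$ (so that $\theta\ge\theta^2$) together with $(1-e^{-1})/8\ge 1/(2e^2)$. Combining this with the factor $2$ from the Poisson comparison proves the lemma.

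The only points requiring care are the monotonicity check that licenses Lemma~\ref{lem:poisson-bound} and the constant bookkeeping in the last chain of inequalities; neither is serious, since the stated bound is far from tight — for small $\theta$ the true probability decays like $\exp(-\Theta(\theta|S|))$ while the claimed bound is only $\exp(-\Theta(\theta^2|S|))$ — so there is ample slack and no sharp estimate is needed.
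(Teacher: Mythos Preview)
The paper does not actually prove this lemma; it is quoted verbatim from \cite{FGG18} (their Lemma~2.3), so there is no in-paper proof to compare against. Judged on its own, your argument is sound: the event is indeed monotone decreasing, Lemma~\ref{lem:poisson-bound} legitimately transfers the question to i.i.d.\ Poisson$(\theta)$ loads, the indicators $\ind(Y_m>0)$ are i.i.d.\ Bernoulli$(1-e^{-\theta})$, and the Chernoff lower-tail bound with your constant checks (in particular $(1-e^{-1})/8\approx0.079>1/(2e^2)\approx0.068$) goes through. This Poissonize-then-Chernoff route is the natural one and is presumably what \cite{FGG18} does as well.

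One caveat worth flagging explicitly: you impose $\theta\le1$, and indeed the statement as written is \emph{false} without some such restriction---once $\theta\ge 2e$ the event $\big|\{m\in S:X_m>0\}\big|\le\theta|S|/(2e)$ has probability~$1$ while the asserted bound is strictly less than~$1$ for large $|S|$. Your observation that the applications in this paper only ever invoke the lemma with $\theta=\theta_i\le\rho/((d-\epsilon)\ell)=o(1)$ is correct, so the restriction is harmless here; still, it would be cleaner to state the hypothesis $\theta\le1$ (or $\theta<2e$) up front rather than slip it in as a parenthetical.
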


\section{MaxLoad upper bound: \texorpdfstring{$\left(\frac{d\log n}{\log\log n}\right)^{1/d}$}{(d lg n / log log n)(1/d)}-threshold strategy}

This section is dedicated to establishing a tight upper bound for the maximum load
of the $\ell$-threshold strategy with $\ell=\left(\frac{d\log n}{\log\log n}\right)^{1/d}$, from which the upper bound in Theorem \ref{thm:main} is an immediate consequence. Our proof essentially follows from iterations of the argument in \cite{FGG18}. Observe that with this choice of $\ell$, we have
$$
\ell^d\log \ell= \left(1-\frac{\log\log\log n - \log d}{\log\log n}\right)\log n,
$$
which implies
\begin{equation}\label{eq:ell n relations}
\ell^{\ell^d}=n^{1-o(1)}.
\end{equation}

\begin{prop}\label{prop:upper bound}
Let $\ell=\Big(\frac{d\log n}{\log\log n}\Big)^{1/d}$. For any $\epsilon>0$, there exists $n_0=n_0(d, \rho, \epsilon)$ such that for all $n>n_0$, the $\ell$-threshold strategy $f$ satisfies
\begin{align}\label{eq:upper bound}
\P\left(\maxload^f(\lfloor\rho n\rfloor)>(d+\epsilon)\ell\right)< n^{-\epsilon/3}.
\end{align}
\end{prop}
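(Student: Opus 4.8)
The plan is to control the maximum load by tracking, layer by layer, how many bins can accumulate a large number of balls in a fixed allocation round, using the induced $(d-j+1)$-thinning strategies $f^j$ together with the Poissonization lemmata quoted above. First I would observe that, since $f$ is the $\ell$-threshold strategy, a bin receives at most $\ell$ balls in round $i$ while its round-$i$ count is below $\ell$, so $L^f_{i,m}(r_i(t)) \le \ell+1$ for every $i$; consequently $\maxload^f(\lfloor\rho n\rfloor)=\max_m \sum_{i=1}^d L^f_{i,m}(r_i(\lfloor\rho n\rfloor)) \le d(\ell+1)$ deterministically on the event that every round's count is saturated at $\ell+1$ — but that crude bound is $d\ell+d$, not $(d+\epsilon)\ell$, so the real work is to show that the $d$ rounds cannot \emph{all} be near-saturated at a common bin except with probability $n^{-\epsilon/3}$. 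To do this I would bound the number of balls that ever reach round $i$, i.e. control $r_i(\lfloor\rho n\rfloor)$: a ball advances from round $i$ to round $i+1$ only if its $i$-ary bin already held more than $\ell$ round-$i$ balls, and since at most a $1/(\ell+1)$-fraction of the round-$i$ balls can be "overflow" balls sent onward (each full bin absorbs $\ell+1$ before overflowing), one gets roughly $r_{i+1}(\lfloor\rho n\rfloor) \lesssim \frac{\rho n}{(\ell+1)^i}$, hence round $d$ sees only $O(\rho n/\ell^{d-1})$ balls.

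The key estimate I would then prove by downward induction on $i$ from $i=d$ to $i=1$: for a suitable decreasing sequence of thresholds, the set $S_i$ of bins that are "full in round $i$" (i.e. have $L^f_{i,m} > \ell$, equivalently reject a round-$i$ ball) has size at most $n^{1-c_i}$ for constants $c_i>0$ with high probability. For the base case $i=d$: the round-$d$ allocations $Z_d(1),\dots,Z_d(r_d)$ with $r_d = O(n/\ell^{d-1})$ are i.i.d.\ uniform and independent of everything relevant, so the number of bins receiving more than $\ell$ of them is, by Lemma~\ref{lem:poisson-bound}, dominated (up to a factor $2$) by $\sum_m \ind(Y_m > \ell)$ with $Y_m \sim \mathrm{Poisson}(\theta)$, $\theta = O(1/\ell^{d-1})$; since $\P(Y_m>\ell) \le \theta^{\ell+1}/(\ell+1)! \le \ell^{-\Theta(\ell \cdot d)}$, which by \eqref{eq:ell n relations} is $n^{-\Theta(1)}$ when multiplied by $n$, a union bound or Markov bound shows this set is empty whp — so in fact $S_d=\emptyset$ and no bin gets more than $\ell+1$ balls even in round $d$... wait, that would give $\maxload \le (d-1)(\ell+1) + (\ell+1)$, still $d\ell+d$. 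The actual sharpening must be that for the \emph{earlier} rounds we cannot have all bins full, so I would instead argue: a bin has load $\ge (d-1+\delta)\ell$ only if it is full in at least $d-1$ rounds; the bins full in round $i$ form a set $S_i$ that is a \emph{primary}-type image of the ball stream restricted to balls that were rejected $i-1$ times, and intersecting these events across $i$ forces a bin that is hit many times in each of several independent-ish sub-streams, whose probability I bound by multiplying the per-round Poisson tail probabilities and using \eqref{eq:ell n relations} to beat the factor $n$.

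Concretely, the cleanest route is a union bound over bins $m$ and over which subset $I \subseteq [d]$ of rounds are "full": $\P(L^f_{i,m} \ge \ell \text{ for all } i\in I)$ factorizes (conditionally, using that $f^j$ only depends on earlier allocations) into a product of tail probabilities $\prod_{i\in I}\P(\text{round-}i\text{ stream hits }m\text{ at least }\ell\text{ times})$, each at most $(\rho/( \ell!))\cdot(\text{something})$, so the product over $|I|=d$ rounds is at most $\bigl(C/\ell!\bigr)^{d} \le \ell^{-d\ell(1-o(1))} = n^{-d(1-o(1))}$ by \eqref{eq:ell n relations}, and summing over $n$ bins and the $2^d$ choices of $I$ with $|I|\ge d$... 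I'd arrange the bookkeeping so the surviving probability for $\maxload^f > (d+\epsilon)\ell$ is $\le n \cdot n^{-(1+\epsilon/2)} \le n^{-\epsilon/3}$ for $n$ large. \textbf{The main obstacle} I anticipate is handling the dependence: the stream of balls reaching round $i$, and in particular which of the $Z_i(r)$ get used and when ($t_i(r)$), is determined by the threshold decisions in rounds $1,\dots,i-1$, so the round-$i$ allocations are i.i.d.\ uniform but the \emph{number} $r_i$ of them and the stopping structure is random; the excerpt's setup with $f^j$, $r_i(t)$, $t_i(r)$ is precisely engineered to let one condition on the first $i-1$ rounds and treat round $i$ as a fresh balls-and-bins instance with a random but boundable number of balls, and I would lean on that — conditioning on $\{r_i(\lfloor\rho n\rfloor) \le R_i\}$ for the high-probability upper bounds $R_i \approx \rho n/\ell^{i-1}$ derived above, then applying Lemmas~\ref{lem:poisson-bound} and \ref{lem:max-bound} inside that conditioning.
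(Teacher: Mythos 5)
There is a genuine gap, and it is quantitative at the heart of the argument. Your overall skeleton (condition on high-probability upper bounds $R_i$ for $r_i(\lfloor\rho n\rfloor)$, treat each round as a fresh balls-and-bins instance via Lemma~\ref{lem:poisson-bound}, then a union bound over bins for the last round) is exactly the paper's architecture, and your framing is also simplified by noting that only the $d$-ary round matters: since $f_{t,d}\equiv 0$, rounds $1,\dots,d-1$ are deterministically capped at threshold by the strategy, so the whole proposition reduces to showing $\max_m L^f_{d,m}\le(1+\epsilon)\ell$ whp (your worry about ``all $d$ rounds near-saturated at a common bin'' is off target, as is the claim that $L^f_{d,m}\le \ell+1$ deterministically). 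The fatal problem is your estimate $r_{i+1}\lesssim \rho n/(\ell+1)^i$. First, the stated justification is not valid even deterministically: the ``each full bin absorbs $\ell+1$ before overflowing'' argument bounds the number of \emph{full bins} by $r_i/(\ell+1)$, not the number of \emph{rejected balls} — a single full bin can reject arbitrarily many subsequent suggestions, so no deterministic $1/(\ell+1)$-fraction bound holds. Second, and more importantly, even granting it, a final-round density $\theta_d\approx \ell^{-(d-1)}$ is far too large: since $n=\ell^{(1-o(1))\ell^d}$ by \eqref{eq:ell n relations}, the union bound needs $n\cdot\theta_d^{(1+\epsilon)\ell}/L!\ll 1$, i.e.\ $\theta_d$ of order $\ell^{-\ell^{d-1}}$; with $\theta_d\approx\ell^{-(d-1)}$ the maximum final-round load would in fact be of order $\ell^d$, not $(1+\epsilon)\ell$. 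The paper obtains the needed doubly-exponential decay by bounding the total overflow $\sum_m\max\{Y_m-\ell,0\}$ of Poissonized loads via an exponential-moment (Chernoff) argument, yielding the recursion $\beta_{i+1}=2n(\beta_i/n)^{\ell}/\ell!$ — the density is raised to the $\ell$-th power at each round, not merely divided by $\ell$.

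The same miscalibration of \eqref{eq:ell n relations} recurs in your probability estimates: $\ell^{-\Theta(d\ell)}$ is \emph{not} $n^{-\Theta(1)}$ (that would require an exponent of order $\ell^d$ in $\ell$), so the claim that $S_d=\emptyset$ whp and the claimed product bound $(C/\ell!)^d\le n^{-d(1-o(1))}$ in your union bound over subsets $I$ of rounds both fail; indeed with $r_d\approx n/\ell^{d-1}$ the expected number of bins receiving more than $\ell$ round-$d$ balls is $\ell^{\ell^d-\Theta(d\ell)}\gg 1$. To repair the proof you would need to replace the geometric recursion for $r_i$ by the paper's overflow recursion (or an equivalent bound showing $r_d\le n\,\ell^{-(1-o(1))\ell^{d-1}}$ whp), after which your final-round union bound goes through essentially as you describe.
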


\begin{proof}
By definition, the $\ell$-threshold strategy guarantees that $L_{i, m}^f(r)\leq \ell$ for all $i\in[d-1]$, $m\in [n]$ and $r\in\N$.
Set $L=(1+\epsilon)\ell$. We have
\begin{align}\label{eq:upper-maxload} 
\P\left(\text{MaxLoad}^f(\lfloor\rho n\rfloor)>(d+\epsilon)\ell\right) \leq \P\left(\max_{m\in[n]}L_{d, m}^f(r_{d}(\lfloor\rho n\rfloor))>L\right).
\end{align}
We will upper bound this probability conditioned on the event that $r_i:=r_i(\lfloor\rho n\rfloor)$  is bounded above by $\beta_i$ for all $i\in[d]$, where the sequence $\{\beta_i\}_{i\in[d]}$ is chosen along the proof so that this event occurs with high probability. Using the law of total probability, we have
\begin{align}\label{eq:cond}
\P\left(\max_{m\in[n]}L_{d, m}^f(r_{d})>L\right) &\leq \P\left(\max_{m\in[n]}L_{d, m}^f(r_{d})>L\ \big|\ r_{d}\leq\beta_{d}\right)+\P(r_{d}>\beta_{d})\nonumber\\
&\leq\P\left(\max_{m\in[n]}L_{d, m}^f(\lfloor\beta_{d}\rfloor)>L\right)+\P(r_{d}>\beta_{d})\nonumber\\
&\leq\P\left(\max_{m\in[n]}L_{d, m}^f(\lfloor\beta_{d}\rfloor)>L\right)+\sum_{i=2}^{d}\P(r_i>\beta_i\ |\ r_{i-1}\leq\beta_{i-1}).
\end{align}
Next, we define the sequence $\{\beta_i\}_{i\in[d]}$ in an iterative way such that the 2nd term of \eqref{eq:cond} is small. We set $r_1=\lfloor\rho n\rfloor$ and $\beta_1=\rho n$. To define $\beta_2$ and upper bound $\P(r_2>\beta_2\ |\ r_1\leq \beta_1)$, we introduce independent  Poissons $\{Y_{k}\}_{k\in [n]}$ with parameter $\rho_1=\beta_1/n$. Define
$$
Y=\sum_{k=1}^n\max\{Y_{k}-\ell, 0\}.
$$
By Lemma \ref{lem:poisson-bound}, we have
$$
\P(r_2>\beta_2)\leq 2\P(Y>\beta_2).
$$
Since
\begin{align*}
\E \left(e^{\max\{Y_{1}-\ell, 0\}}\right)
\leq 1+ \sum_{k=\ell+1}^\infty e^{k-\ell}\cdot e^{-\rho_1}\frac{\rho_1^k}{k!}<1+\frac{\rho_1^\ell}{\ell!}<\exp\left(\frac{\rho_1^\ell}{\ell!}\right),
\end{align*}
we have
$$
\P(Y>\beta_2)=\P\left(e^{Y}>e^{\beta_2}\right)< \exp\left(\frac{n\rho_1^\ell}{\ell!}-\beta_2\right).
$$
To make the right-hand side of the above inequality small, we set $\beta_2=2n\rho_1^\ell/\ell!$ and obtain
$$
\P(r_2>\beta_2)< 2\exp\left(-\frac{\beta_2}{2}\right).
$$
Once $\beta_{i-1}$ has been defined, we define $\beta_i$ and upper bound $\P(r_i>\beta_i\ |\ r_{i-1}\leq \beta_{i-1})$ by repeating the above argument, replacing $\{Y_{k}\}_{k\in [n]}$ with independent Poisson random variables with parameter $\rho_{i-1}:=\beta_{i-1}/n$. Applying Lemma \ref{lem:poisson-bound} to such variables we obtain
$$
\P(r_i>\beta_i\ |\ r_{i-1}\leq \beta_{i-1})\leq 2\P(Y>\beta_i)<2\exp\left(\frac{n\rho_{i-1}^\ell}{\ell!}-\beta_i\right).
$$
Setting $\beta_i=2n\rho_{i-1}^\ell/\ell!$, this yields
$$
\P(r_i>\beta_i\ |\ r_{i-1}\leq \beta_{i-1})< 2\exp\left(-\frac{\beta_i}{2}\right).
$$
Putting together our definition of $\beta_i$ and $\rho_i$, we obtain the recursive formula
$$\frac{\beta_i}{n}=\frac{2}{\ell!}\cdot\left(\frac{\beta_{i-1}}{n}\right)^\ell,$$
with $\beta_1=\rho n$. By solving this recursion we obtain
\begin{align}\label{eq:beta-d}
\beta_{d}=n \rho^{\ell^{d-1}}\left(\frac{2}{\ell!}\right)^{\sum_{i=0}^{d-2} \ell^i}
= n\rho^{\ell^{d-1}}\left(\frac{2}{\ell!}\right)^{\frac{\ell^{d-1}-1}{\ell-1}}.
\end{align}
Observe that $2/\ell!>\ell^{-(\ell-1)}$ when $\ell\geq3$ (using the fact that all $k\in\N$ satisfy $k!\leq e\sqrt{k}(k/e)^k$). Thus for $n\ge 3$ we have
$$
\beta_{d}> n\left(\frac{\rho}{\ell}\right)^{\ell^{d-1}}>n\ell^{-2\ell^{d-1}}>n\ell^{-\ell^{d}}=n^{1-o(1)},
$$
where the second and third inequalities hold when $\ell>\max\{2, 1/\rho\}$, and the last identity uses the observation $\ell^{\ell^d}=n^{1-o(1)}$ as stated in \eqref{eq:ell n relations}. Since $\{\beta_i\}_{i\in[d]}$ is a decreasing sequence, we deduce the following bound on the
second term of \eqref{eq:cond}.
\begin{align}\label{eq:cond-prob}
\P(r_i>\beta_i\ |\ r_{i-1}\leq \beta_{i-1})< 2\exp\left(-\frac{\beta_{d}}{2}\right)< e^{-n^{1-o(1)}}.
\end{align}
To estimate the first term of \eqref{eq:cond}, we consider $\{Y_{k}\}_{i\in[n]}$, a collection of independent Poisson random variables with parameter $\rho_d=\beta_{d}/n$ and apply Lemma \ref{lem:poisson-bound} to obtain
$$
\P\left(\max_{m\in[n]}L_{d, m}^f(\lfloor\beta_{d}\rfloor)>L\right)\leq 2\P\left(\max_{1\leq k\leq n}Y_{k}\geq L\right).
$$
Notice that
$$
\P(Y_{ 1}\geq L)=e^{-\rho_d}\sum_{k=L}^\infty\frac{\rho_d^k}{k!}\leq \frac{\rho_d^L}{L!}.
$$
Take a union bound to obtain
$$
\P\left(\max_{m\in[n]}L_{d, m}^f(\lfloor\beta_{d}\rfloor)>L\right)\leq 2n\frac{\rho_d^L}{L!}=\frac{2n}{L!}\left(\rho^{\ell^{d-1}}\left(\frac{2}{\ell !}\right)^{\frac{\ell^{d-1}-1}{\ell-1}}\right)^{L},
$$
where the identity follows from our definition $\rho_d=\beta_d/n$ and the formular \eqref{eq:beta-d}. Using the inequality $k!\geq \sqrt{2\pi k}(k/e)^k$ for any $k\in\N$, we have $2/L!<(e/L)^{L}<1$ when $\ell>e$ and
\begin{align*}
\left(\frac{2}{\ell!}\right)^{L\cdot\frac{\ell^{d-1}-1}{\ell-1}}<\left(\frac{e}{\ell}\right)^{\ell L\cdot\frac{\ell^{d-1}-1}{\ell-1}}=\left(\frac{e}{\ell}\right)^{(1+\epsilon)\ell^2\cdot\frac{\ell^{d-1}-1}{\ell-1}}<\left(\frac{e}{\ell}\right)^{(1+\epsilon)\ell^d}.
\end{align*}
Therefore, we end up with
\begin{align}\label{eq:last-round-prob}
\P\left(\max_{m\in[n]}L_{d, m}^f(\lfloor\beta_{d}\rfloor)>L\right) &< n\left(\frac{\rho e}{\ell}\right)^{(1+\epsilon)\ell^d}\notag\\
&< n\ell^{-(1+\epsilon/2)\ell^d}\nonumber\\
&=\exp\left(\log n-\log n\left(1+\frac{\epsilon}{2}\right)\left(1-\frac{\log\log\log n - \log d}{\log\log n}\right)\right)\nonumber\\
&<n^{-\epsilon/3}.
\end{align}
The second inequality holds automatically whenever $\rho e<1$. Otherwise, it holds for all $n$ large enough to satisfy $\epsilon>\log(\rho e)/(\log \sqrt{\ell}-\log(\rho e))=\Theta((\log n-\log\log n)^{-1})$. The last inequality holds  for all $n$ large enough to satisfy $\epsilon>7(\log\log\log n - \log d)/\log\log n$. The desired statement \eqref{eq:upper bound} now follows from \eqref{eq:upper-maxload}, \eqref{eq:cond}, \eqref{eq:cond-prob} and \eqref{eq:last-round-prob}.
\end{proof}

\begin{rmk}
From the above proof, we see that the parameter $\epsilon$ is allowed to depend on $n$ and Proposition \ref{prop:upper bound} remains valid as long as $\epsilon>C\log\log\log n/\log\log n$ for an appropriate absolute constant $C$. Thus, with this choice of $\epsilon$, the upper bound in Theorem \ref{thm:main} is an immediate consequence of Proposition \ref{prop:upper bound}.
\end{rmk}

\section{MaxLoad lower bound: generic thinning strategies}\label{sec: lower}



In this section, we prove the following proposition, from which the lower bound in Theorem \ref{thm:main} is an immediate consequence.

\begin{prop}\label{prop:lower bound}
For any $\epsilon>0$, there exists $n_0=n_0(d, \rho, \epsilon)$ such that for all $n>n_0$, any $d$-thinning strategy $f$ satisfies
\begin{align}\label{eq:lower bound}
\P\left(\maxload^f (\lfloor\rho n\rfloor)<(d-\epsilon)\ell\right)<\exp\left(-n^{\epsilon/3d}\right).
\end{align}
\end{prop}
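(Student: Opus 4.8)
The plan is to prove the stronger statement that \emph{every} $d$-thinning strategy $f$ produces a bin of load at least $(d-\epsilon)\ell$ with the stated probability, by induction on $d$ via the reduction lemma announced in the introduction. The base case is $d=1$: here the overseer has no freedom at all, so $\{Z(k)\}_{k\le\lfloor\rho n\rfloor}$ is the classical balls-and-bins process, and Lemma~\ref{lem:max-bound} with $|S|=n$, $\theta=\rho$, $k=(1-\epsilon)\ell$ (where $\ell$ is the $1/1$-threshold value $\rho\log n/\log\log n$) shows the maximum load is at least $(1-\epsilon)\ell$ except with probability $2\exp(-\rho^k n/(ek!))$; one checks via $\ell^{\ell}=n^{1-o(1)}$-type bookkeeping that this is at most $\exp(-n^{\epsilon/3})$ for $n$ large. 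For the inductive step I would take a $d$-thinning strategy $f$ and pass to the induced $(d-1)$-thinning strategy $f^+=f^2$ of~\eqref{eq:fplus def}, run on the balls whose primary allocation $f$ rejected. The key point, which I will phrase as a reduction lemma, is: conditionally on the primary-allocation layer, $f^+$ is a legitimate $(d-1)$-thinning strategy on $r_2(\lfloor\rho n\rfloor)$ balls placed into $n$ bins, and any bin achieving load $\ge(d-1-\epsilon')\ell'$ under $f^+$ already has that many balls accepted in rounds $2,\dots,d$ under $f$; I then need to add one further ball that was accepted in the \emph{primary} round at that same bin.

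The load accounting goes as follows. Write $L_m^f(t)=\sum_{i=1}^d L_{i,m}^f(r_i(t))$. The balls counted by $f^+$ living in bin $m$ are exactly $\sum_{i=2}^d L_{i,m}^f(\cdot)$, so it suffices to show two things happen simultaneously with overwhelming probability: (i) there are many bins $m$ that are loaded in the $f^+$-subprocess, and (ii) among those bins at least one also received a primary allocation that was accepted by $f$. For (i) I would \emph{not} ask for a single heavily loaded bin from $f^+$, but rather for a large \emph{set} $S$ of bins each with $f^+$-load $\ge(d-1-\epsilon)\ell$ — roughly $n^{1-o(1)}$ of them — which is what the induction hypothesis should be strengthened to deliver (the exponential-in-$n^{\Theta(\epsilon)}$ failure probability is comfortably small enough to survive a union bound / to be fed into the next layer). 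Given such a set $S$ with $|S|=n^{1-o(1)}$, for (ii) I observe that over the course of placing $\lfloor\rho n\rfloor$ balls there are $\lfloor\rho n\rfloor$ i.i.d.\ uniform \emph{primary} suggestions $Z_1(1),\dots,Z_1(\lfloor\rho n\rfloor)$, and I want one of them to land in $S$ \emph{and} be accepted. The cleanest route is to note that the threshold-type obstruction cannot block a bin in $S$ forever: either $f$ accepts a primary allocation into some $m\in S$, or every one of the (many) primary suggestions into $S$ was rejected — but a rejected primary suggestion is precisely a ball that enters the $f^+$-subprocess, and once $\ell$ of those have been accepted into a fixed $m\in S$ no more can be (threshold), forcing the $f^+$-load of each $m\in S$ to exceed... here I should instead argue directly: condition on $r_2(\lfloor\rho n\rfloor)\le\beta_2=2n\rho^\ell/\ell!$ as in Proposition~\ref{prop:upper bound}, so at most $\beta_2=o(n)$ balls are rejected at the primary stage, hence at least $\lfloor\rho n\rfloor-\beta_2=\Theta(n)$ balls are accepted at the primary stage; these $\Theta(n)$ accepted-primary balls are placed into $n$ bins according to the (conditionally) i.i.d.\ uniform variables $Z_1(\cdot)$ restricted to the accepted indices, so by Lemma~\ref{lem:nonempty-bound} (or a direct binomial estimate) the number of bins hit by an accepted primary allocation is $\ge cn$ with failure probability $\exp(-\Omega(n))$, and this set, being of positive density, must intersect the set $S$ of density $n^{-o(1)}$ — in fact one should run the argument the other way, fixing $S$ first (it is measurable w.r.t.\ the rounds $\ge2$ data) and then noting the primary-accepted bins form a random $cn$-subset independent of $S$ in the relevant sense, so $\P(\text{no }m\in S\text{ primary-hit})\le (1-c)^{|S|}=\exp(-n^{1-o(1)})$.

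Putting the layers together: with $\epsilon$ split as $\epsilon=\epsilon_1+\dots+\epsilon_d$ (or just using $\epsilon/d$ at each layer, since each layer loses an additive $O(\ell\log\log\log n/\log\log n)=o(\ell)$ anyway), after $d-1$ reductions I reach the $d=1$ base case applied to the innermost $r_d(\lfloor\rho n\rfloor)$-ball process — which I must also lower-bound: the same $\beta_i$ recursion from Proposition~\ref{prop:upper bound}'s proof gives $r_i(\lfloor\rho n\rfloor)\ge$ (a matching lower bound of the same order) $=n^{1-o(1)}$ with probability $1-\exp(-n^{1-o(1)})$, using Lemma~\ref{lem:poisson-bound} in the reverse direction together with a lower tail bound for $\sum\max\{Y_k-\ell,0\}$. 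Collecting: each bin that is loaded at level $\ge(1-\epsilon_d)\ell$ in the round-$d$ process, and which also survived to pick up one accepted ball in each of rounds $1,\dots,d-1$, has total $f$-load $\ge d\ell - (\epsilon_1+\dots+\epsilon_d)\ell \ge (d-\epsilon)\ell$, and the total failure probability is a sum of $d$ terms each at most $\exp(-n^{\epsilon/3d})$ (the base case dominates; the reduction layers fail only with probability $\exp(-n^{1-o(1)})$), giving the claimed bound $\exp(-n^{\epsilon/3d})$ for $n$ large.

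\textbf{Main obstacle.} The delicate point is making the reduction lemma rigorous: I must verify that $f^+$, which by~\eqref{fj def} depends on the first round of allocations assigned by $f$, is genuinely a valid $(d-1)$-thinning strategy with respect to the \emph{fresh} randomness $\{Z_i(r)\}_{i\ge2}$ — the independence claim in the "Induced thinning strategy" paragraph — \emph{and} that the number of balls $r_2(\lfloor\rho n\rfloor)$ it processes is large enough (it is random, so the induction hypothesis, stated for a deterministic ball-count $\lfloor\rho' n\rfloor$, must be invoked after conditioning on $r_2\ge n^{1-o(1)}$ and using monotonicity of MaxLoad in the number of balls). Keeping the "$o(1)$"s uniform across the $d-1$ levels — so that the cumulative loss is still $o(\ell)$ and the threshold parameter $\ell$ is the same $(d\log n/\log\log n)^{1/d}$ throughout rather than drifting level by level — is the bookkeeping that requires care, but it is exactly parallel to the $\beta_i$-recursion already carried out in the proof of Proposition~\ref{prop:upper bound}.
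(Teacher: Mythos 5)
There is a genuine gap, and it is quantitative rather than technical: your load accounting in the inductive step falls short of the target by $\Theta(\ell)$ per level. You propose to find a bin whose $f^+$-load (rounds $2,\dots,d$) is at least $(d-1-\epsilon')\ell$ and then add \emph{one} accepted primary ball to it, and at the end you sum "one accepted ball in each of rounds $1,\dots,d-1$" plus $(1-\epsilon_d)\ell$ from the last round. That adds up to roughly $\ell+d-1$, or in the single-step version $(d-1-\epsilon')\ell+1$, not $(d-\epsilon)\ell$: each of the earlier rounds must contribute about $\ell$ balls to the \emph{same} bin, not one. This is not a bookkeeping slip one can absorb into the $o(1)$'s — the optimal $\ell$-threshold strategy itself shows the shortfall is real: under it the sub-process $f^+$ runs on only $r_2\approx 2n\rho^{\ell}/\ell!=n\ell^{-(1+o(1))\ell}$ balls, at which density the best possible $(d-1)$-thinning maximum load is only $(d-1+o(1))\ell$, so no argument that extracts a single primary ball on top of the $f^+$-load can ever reach $(d-\epsilon)\ell$. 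Relatedly, your induction hypothesis is stated for a deterministic, constant-fraction ball count $\lfloor\rho' n\rfloor$, but the adversary controls $r_2$ and will make it $n^{1-o(1)}$ with the $o(1)$ \emph{not} vanishing in the way a fixed $\rho'$ requires; conditioning plus monotonicity does not bridge that, because the bound you need must degrade gracefully with the (strategy-chosen) density. A further, smaller issue is the claimed independence between the set $S$ of $f^+$-heavy bins and the set of bins hit by accepted primaries: both are functions of the whole history (which balls enter the $f^+$ process is itself decided by $f$ after seeing the primaries), so the $(1-c)^{|S|}$ estimate is not justified as stated.

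The paper's proof closes exactly this gap with a different reduction (Lemma \ref{lem:reduction}): the allocation is cut into $\lceil s\rceil$ stages of $w=\lfloor\gamma/s\rfloor$ balls, level sets $S_k$ of bins in $S$ with load $\ge k$ after $k$ stages are tracked, and the event $\{\maxload_S^f<s\}$ is decomposed according to the first stage $k$ at which $|S_k|$ drops below a prescribed $\gamma_k$. At that stage either few bins of $S_{k-1}$ were even offered primaries (controlled by Lemma \ref{lem:nonempty-bound}), or many primaries into $S_{k-1}$ were rejected, forcing the induced $(d-1)$-thinning strategy $f^+$ to place at least $\gamma_k$ balls while keeping its maximum load over $S_{k-1}$ below $s-(k-1)$. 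Thus the primary round is allowed to contribute a \emph{variable} amount $k-1$ (up to about $\ell$) of load to the surviving bins, with the sub-process target reduced accordingly, and the proposition follows by iterating this lemma $d-1$ times over all choices $(k_1,\dots,k_{d-1})$, finishing with the classical balls-and-bins bound (Lemma \ref{lem:max-bound}) on the surviving subset; an AM--GM argument over $\prod_j(k_j+2)$ is what makes the final density large enough. If you want to salvage your outline, you would need to replace "one extra primary ball" by this staged level-set mechanism (or an equivalent way of letting round $1$ contribute $\approx\ell$ to a common bin), and restate the inductive claim for sub-constant densities and arbitrary target subsets, which is essentially what the paper's reduction lemma does.
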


We first prove a reduction lemma which reduces a d-thinning allocation problem to a
$(d-1)$-thinning problem. The above statement will follow from iterations of this thinning
reduction lemma.

Let us consider the following more general problem of allocating $\lfloor\gamma\rfloor$ balls into $n$ bins using a $d$-thinning strategy $f$. Let $s>0$ and let $S\subseteq [n]$ be a subset. We denote by $A^f$ the event that the maximum load over $S$ is less than $s$, i.e.,
\begin{align}\label{eq:A}
A^f=\left\{\text{MaxLoad}_{S}^{f}(\lfloor\gamma\rfloor)<s\right\}.
\end{align}
Our goal is to upper bound the probability $\P\left(A^f\right)$. We divide the allocation process into $\lceil s\rceil$ stages, each of which consists of allocating  $w=\lfloor\gamma/s\rfloor$ balls except for the final stage in which the remaining balls are allocated.  We define the level sets $\big\{S_k^f\big\}_{k=0, 1, \cdots, \lceil s\rceil}$  as
\begin{align}\label{eq:S_i+1}
S^f_k=\left\{m\in S: L_m^f(kw)\geq k\right\},
\end{align}
where $L_m^{f}(t)$, defined as per \eqref{eq:load-m}, represents the load of bin $m$ after $t$ balls have been allocated using a $d$-thinning strategy $f$.  Given $\{\gamma_k\}_{k=0, 1, \cdots, \lceil s\rceil}$, an arbitrary sequence with $\gamma_0=|S|$, we
introduce the following events
\begin{equation}\label{eq:E_i+1}
\begin{split}E^f_k&=\left\{\big|S^f_k\big|<\gamma_k\right\},\\
F^f_k&=\left\{\text{MaxLoad}_{S}^f(kw)<s\right\}.
\end{split}
\end{equation}
 Here, $E_k^f$ is the event that after $k$ stages there are less than $\gamma_k$ bins containing $k$ or more balls, and $F_k^f$ is the event that after $k$ stages the maximum load over $S$ is less than $s$. Recall that  $f^+$, defined as per \eqref{eq:fplus def}, is the $(d-1)$-thinning strategy induced from $f$. Our reduction argument utilizes the following events
\begin{equation}\label{eq:Ak}
\begin{split}
A^{f^+}_k&=\Big\{\maxload_{S_{k-1}^f}^{f^+}(\lfloor\gamma_k\rfloor)<s-(k-1)\Big\},\\
B^f_k&=\Big\{\psi_{S^f_{k-1}}^f(w)< 2\gamma_k\Big\},
\end{split}
\end{equation}
where $\psi_{S_{k-1}^f}^f(w)$ was defined in \eqref{eq:primary-super-level} as the number of bins in $S_{k-1}^f$ selected as primary slots in the allocation of $w$ balls using the $d$-thinning strategy $f$. We require the following reduction lemma.

\begin{lem}\label{lem:reduction}
Let $f$ be a $d$-thinning strategy. For any $\gamma, s>0, S\subseteq [n]$ and arbitrary sequence  $\{\gamma_k\}_{k=0, 1, \cdots, \lceil s\rceil}$ with $\gamma_0=|S|$, under the notations above, we have
\begin{align}\label{eq:reduction}
\P\left(A^f\right)\leq \sum_{k=1}^{\lceil s\rceil}\P\left(\left(A_k^{f^+}\cup B_k^f\right)\cap \left(E_{k-1}^f\right)^c\right).
\end{align}
\end{lem}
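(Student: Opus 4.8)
The plan is to run a stage-by-stage argument tracking the level sets $S^f_k$, and to observe that on the event $A^f$ (the max load over $S$ stays below $s$), we have in particular $F^f_{\lceil s\rceil}$, and that $S^f_{\lceil s\rceil}$ must be empty: a bin counted in $S^f_{\lceil s\rceil}$ would have load $\geq\lceil s\rceil\geq s$, contradicting $A^f$. Hence $A^f\subseteq E^f_{\lceil s\rceil}$ (with $\gamma_{\lceil s\rceil}$ any positive number, or trivially since $|S^f_{\lceil s\rceil}|=0$). Since $E^f_0=\{|S|<\gamma_0\}=\{|S|<|S|\}=\emptyset$, the set $E^f_0$ is empty, so we may telescope: $A^f\subseteq E^f_{\lceil s\rceil}\setminus E^f_0\subseteq\bigcup_{k=1}^{\lceil s\rceil}\big(E^f_k\cap(E^f_{k-1})^c\big)$. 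Thus it suffices to show, for each $k\in[\lceil s\rceil]$, the inclusion
$$
A^f\cap E^f_k\cap(E^f_{k-1})^c\ \subseteq\ \big(A^{f^+}_k\cup B^f_k\big)\cap(E^f_{k-1})^c,
$$
after which a union bound over $k$ yields \eqref{eq:reduction}.

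The heart of the matter is therefore the single-stage inclusion $A^f\cap E^f_k\subseteq A^{f^+}_k\cup B^f_k$ on $(E^f_{k-1})^c$. Fix $k$ and work conditionally on the configuration after $(k-1)w$ balls; on $(E^f_{k-1})^c$ we have $|S^f_{k-1}|\geq\gamma_{k-1}$, and $A^f$ forces every bin of $S$ (in particular of $S^f_{k-1}$) to have load $<s$ throughout, so each bin of $S^f_{k-1}$ receives fewer than $s-(k-1)$ additional balls during stage~$k$. Now I want to recognize the stage-$k$ allocation restricted to $S^f_{k-1}$ as being governed by the induced $(d-1)$-thinning strategy $f^+$. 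The key combinatorial observation is: if a ball's \emph{primary} allocation $Z_1(\cdot)$ during stage~$k$ lands in a bin of $S^f_{k-1}$, then since that bin already holds $\geq k-1\geq 1$ balls whose primary allocations were accepted — wait, more carefully, the threshold/structure forces the primary slot to be rejected only under the strategy's own rule; but for a \emph{generic} $f$ this need not happen. Here is where $B^f_k$ enters: if $B^f_k$ fails, i.e. fewer than $2\gamma_k$ bins of $S^f_{k-1}$ are offered as primary slots in stage~$k$, then — combined with $E^f_k$, i.e. $|S^f_k|<\gamma_k$ — one deduces that many bins of $S^f_{k-1}$ (at least $\gamma_{k-1}-2\gamma_k$ of them, or the relevant count) are never hit by a primary slot yet also fail to reach level $k$, so every ball \emph{reaching} such a bin does so via a secondary-or-later allocation; those later allocations are exactly the domain of $f^+$, and the load they deposit over this sub-collection is $<s-(k-1)$, which is precisely the event $A^{f^+}_k$ (the max load over $S^f_{k-1}$ under $f^+$ after $\lfloor\gamma_k\rfloor$ balls is $<s-(k-1)$), up to checking that the number of $f^+$-balls in stage~$k$ is at least $\lfloor\gamma_k\rfloor$. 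So the dichotomy is: either $B^f_k$ holds, or $B^f_k$ fails and then $A^f\cap E^f_k$ already implies $A^{f^+}_k$.

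The main obstacle I anticipate is precisely the bookkeeping in this last step — matching the ``number of balls'' parameters so that $A^{f^+}_k$ is stated for $\lfloor\gamma_k\rfloor$ balls, and making sure the induced strategy $f^+$, which depends on the first round of $f$'s allocations, is applied to a subset ($S^f_{k-1}$) that is itself measurable with respect to data independent of the future $f^+$-allocations (this is exactly the validity caveat flagged after \eqref{fj def}). One must check that conditioning on $(E^f_{k-1})^c$ and on the stage-$1$-through-$(k-1)$ history does not destroy the independence needed to treat the stage-$k$ secondary-and-later allocations as a fresh $(d-1)$-thinning process, and that the count of such allocations meets the $\lfloor\gamma_k\rfloor$ threshold whenever $B^f_k$ fails. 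Once the single-stage inclusion is nailed down, the rest is the telescoping and union bound already described, and \eqref{eq:reduction} follows. $\hfill\square$
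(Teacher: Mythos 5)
Your outer skeleton is fine and essentially matches the paper: since $\gamma_0=|S|$ gives $E^f_0=\emptyset$, and $A^f$ forces both $E^f_{\lceil s\rceil}$ and $F^f_k$ for every $k$, one telescopes over $k$ and it remains to prove the single-stage inclusion $E^f_k\cap F^f_k\subseteq A^{f^+}_k\cup B^f_k$ (the paper telescopes $E_k\cap F_k$; you telescope $E_k$ and carry $A^f$ in place of $F_k$, which is an immaterial variant). The genuine gap is in that single-stage inclusion, which is the entire content of the lemma. First, you invert the event $B^f_k$: by \eqref{eq:Ak}, $B^f_k=\{\psi^f_{S^f_{k-1}}(w)<2\gamma_k\}$, so ``$B^f_k$ fails'' means \emph{at least} $2\gamma_k$ bins of $S^f_{k-1}$ receive a primary offer during the stage, not ``fewer than $2\gamma_k$''. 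Second, and more importantly, your proposed dichotomy (``either $B^f_k$ holds, or $B^f_k$ fails and then $A^f\cap E^f_k$ already implies $A^{f^+}_k$'') is not the right pivot and its second horn is not true: nothing in $A^f\cap E^f_k\cap (B^f_k)^c$ controls the first $\lfloor\gamma_k\rfloor$ balls handled by $f^+$ — neither how many of them fall inside stage $k$, nor indeed whether they occur within the $\lfloor\gamma\rfloor$-ball allocation at all (e.g.\ a strategy that rarely rejects primaries can realize $A^f\cap E^f_k\cap(B^f_k)^c$ while the $f^+$ allocations relevant to $A^{f^+}_k$ are essentially fresh uniform draws, which may violate $A^{f^+}_k$ with positive probability). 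The ``bookkeeping'' you defer at the end is thus not bookkeeping; it is the missing idea.

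The correct pivot is the number $r_2(w)$ of stage-$k$ balls whose primary allocation is rejected, compared with $\gamma_k$. If $r_2(w)>\gamma_k$, then the first $\lfloor\gamma_k\rfloor$ balls of the induced $(d-1)$-thinning process $f^+$ all lie inside the stage, so on $(A^{f^+}_k)^c$ some bin of $S^f_{k-1}$ gains at least $s-(k-1)$ balls during the stage; since it already holds $k-1$ balls, this contradicts $F^f_k$ (via $F_k\subseteq\{\maxload^f_{S_{k-1}}(w)<s-(k-1)\}$). If instead $r_2(w)\leq\gamma_k$, then on $(B^f_k)^c$ at least $2\gamma_k$ distinct bins of $S^f_{k-1}$ are offered as primaries while at most $\gamma_k$ primary offers are rejected in total, so by pigeonhole at least $\gamma_k$ bins of $S^f_{k-1}$ accept a primary ball and become non-empty in the stage, contradicting $E^f_k$ (via $E_k\subseteq\{\phi^f_{S_{k-1}}(w)<\gamma_k\}$). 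This is exactly where the factor $2$ in $2\gamma_k$ and the event $B^f_k$ earn their keep, and neither horn appears in your sketch; your mechanism based on bins never hit by a primary slot does not connect to $A^{f^+}_k$, which concerns the maximum load over all of $S^f_{k-1}$ after exactly $\lfloor\gamma_k\rfloor$ balls of the $f^+$ process.
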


\begin{proof}
For simplicity, we write $A=A^f, S_k=S^f_k, E_k=E_k^f, F_k=F_k^f, A_k=A_k^{f^+}$ and $B_k=B^f_k$. 
Clearly, $A\subseteq E_{\lceil s\rceil}\cap F_{\lceil s\rceil}$ and $F_k\subseteq F_{k-1}$. From the law of total probability, we obtain
$$
\P(E_k\cap F_k)\leq \P(E_{k-1}\cap F_{k-1})+\P((E_k\cap F_k)\cap E_{k-1}^c).
$$
By induction, we thus have,
\begin{align}\label{eq:induction}
\P(A)\leq \P\left(E_{\lceil s\rceil}\cap F_{\lceil s\rceil}\right)\leq \sum_{k=1}^{\lceil s\rceil}\P(E_k\cap F_k\cap E_{k-1}^c),
\end{align}
where $E_0=\emptyset$ as we set $\gamma_0=|S|$. Notice that the size of $S_k$ is at least the number of bins in $S_{k-1}$ which  receive at least one ball in the $k$-th stage. Hence, we have
\begin{align}\label{eq:inclusion-e}
E_k\subseteq \left\{\phi^f_{S_{k-1}}(w)< \gamma_k\right\},
\end{align}
where $\phi_{S_{k-1}}^f(w)$, defined in \eqref{eq:super-level}, is the number of non-empty bins in $S_{k-1}$ after allocating $w$ balls using our $d$-thinning strategy $f$. 
Using the inequality
$$
\text{MaxLoad}_S^f(kw)\geq \text{MaxLoad}_{S_{k-1}}^f((k-1)w)+\text{MaxLoad}_{S_{k-1}}^f(w)
$$
and the fact that $ \text{MaxLoad}_{S_{k-1}}^f((k-1)w)\geq k-1$, we have
\begin{align}\label{eq:inclusion-f}
F_k\subseteq \left\{\text{MaxLoad}_{S_{k-1}}^f(w)<s-(k-1)\right\}.
\end{align}
From the definitions of $r_2$ and $f^+$  in \eqref{eq:r_i} and \eqref{eq:fplus def}, respectively, it follows that
\begin{equation}\label{eq:inclusion-a}
\begin{split}
A_k^c\cap\{r_2(w)&> \gamma_k\}\subseteq\left\{\text{MaxLoad}_{S_{k-1}}^f(w)\geq s-(k-1)\right\},\\
B_k^c\cap\{r_2(w)&\leq \gamma_k\}\subseteq  \left\{\phi^f_{S_{k-1}}(w)\geq \gamma_k\right\}.
\end{split}
\end{equation}
Putting together  \eqref{eq:inclusion-e}, \eqref{eq:inclusion-f} and \eqref{eq:inclusion-a}, we have
\begin{align*}
E_k\cap F_k\subseteq A_k\cup B_k.
\end{align*}
This, together with \eqref{eq:induction}, proves the reduction lemma.

\end{proof}

\begin{proof}[Proof of Proposition \ref{prop:lower bound}]
The proof follows from a multi-round iteration of Lemma \ref{lem:reduction}. Recall that $f^j$ with $j\in[d]$, given in \eqref{fj def}, is the induced $(d-j+1)$-thinning strategy. Particularly, we have $f^1=f$. We start with $\gamma=\rho n$, $S:=[n]$, and $s:=s_1=(d-\epsilon)\ell$. The goal is to upper bound $\P(A)$, where $A:=A^{f^1}$ is the event defined as per \eqref{eq:A}. Set $w:=\lfloor w_1\rfloor$ with $w_1=\gamma/s_1$.  Let  $k_1\in\{0, \cdots, \lceil s_1\rceil\}$.
Recall our definitions of $S_{k_1}^{f^1}$ and $E^{f^1}_{k_1}, F^{f^1}_{k_1}$ in \eqref{eq:S_i+1} and
\eqref{eq:E_i+1}, respectively.  For simplicity, we write
$S_{1,k_1}=S_{k_1}^{f^1}, E_{1,k_1}=E^{f^1}_{k_1}$ and $F_{1,k_1}=F^{f^1}_{k_1}$.
The sequence $\{\gamma_{1, k_1}\}_{k_1\in\{0, \cdots, \lceil s_1\rceil\}}$ is defined as follows
\begin{align*}
\gamma_{1, k_1}=n\left(\frac{\theta_1}{4e}\right)^{k_1},
\end{align*}
where $\theta_1=w_1/n$. We denote  $A_{1, k_1}=A_{k_1}^{f^2}$ and $B_{1,k_1}=B_{k_1}^{f^1}$, which are defined as per \eqref{eq:Ak}, using the fact that $f^+=f^2$.  In these notations, the first subscript (also used as the subscript of $k$)
indicates the first round of allocation (i.e., primary allocation). We apply Lemma \ref{lem:reduction} with aforementioned parameters $\gamma, S, s$ and $\{\gamma_{1, k_1}\}_{k_1\in\{0, \cdots, \lceil s_1\rceil\}}$ to obtain
\begin{align}\label{eq:inductive bound'}
\P(A)\leq \sum_{k_1=1}^{\lceil s_1\rceil}\P((A_{1, k_1}\cup B_{1, k_1})\cap E_{1, k_1-1}^c).
\end{align}
Lemma \ref{lem:nonempty-bound} with $\theta=\theta_1$ and the set there $S=S_{1, k_1-1}$ yields
\begin{align*}
\P(B_{1, k_1}\cap E_{1, k_1-1}^c)\leq 2\exp\left(-\frac{\theta_1^2|S_{1,k_1-1}|}{2e^2}\right)\leq 2\exp\left(-\frac{\theta_1^2\gamma_{1, k_1-1}}{2e^2}\right),
\end{align*}
where the second inequality uses the fact that, under $E_{1, k_1-1}^c$, we have $|S_{1, k_1-1}|\geq \gamma_{1, k_1-1}$. This, together with \eqref{eq:inductive bound'}, yields
\begin{align}\label{eq:inductive bound}
\P(A)
&\le \sum_{k_1=1}^{\lceil s_1\rceil}\P(A_{1, k_1}\cap E_{1, k_1-1}^c)+2\sum_{k_1=1}^{\lceil s_1\rceil} \exp\left(-\frac{\theta_1^2\gamma_{1, k_1-1}}{2e^2}\right).
\end{align}

Our goal is therefore to upper bound $\P(A_{1, k_1} \cap E_{1, k_1-1}^c)$. This will be achieved by establishing an upper bound for $\P(A_{1, k_1})$ analogous to \eqref{eq:inductive bound}, which involves the $(d-2)$-thinning strategy $f^3$ instead of the $(d-1)$-thinning strategy $f^2$. We continue this thinning reduction procedure for $(d-1)$ rounds until the problem is reduced to the analysis of the $1$-thinning strategy $f^d$, which is a problem of deterministic computation.

Firstly, we inductively define some notations analogous to those we used to obtain \eqref{eq:inductive bound}. For $i\in[d]$, we define
\begin{equation}\label{eq:def si}
\begin{split}
s_{i}&=s_{i-1}-(k_{i-1}-1), \\
w_{i}&=\gamma_{i-1, k_{i-1}}/s_{i},
\end{split}
\end{equation}
where  $k_{i-1}\in\{0, 1, \cdots, \lceil s_{i-1}\rceil\}$, and the sequence $\{\gamma_{i, k_i}\}_{k_i\in\{0, \cdots, \lceil s_{i}\rceil\}}$  is defined as follows
\begin{align}\label{eq:gamma-i-iteration}
\gamma_{i, k_i}=\gamma_{i-1, k_{i-1}}\left(\frac{\theta_i}{4e}\right)^{k_i},
\end{align}
where $\theta_i=w_i/n$.
For $i\in[d-1]$, we write $S_{i,k_i}=S_{k_i}^{f_{i}}$, and $E_{i,k_i}=E^{f^{i}}_{k_i}, F_{i,k_i}=F^{f^{i}}_{k_i}$, and  $A_{i,k_i}=A_{k_i}^{f^{i+1}}, B_{i,k_i}=B_{k_i}^{f^{i}}$, which are defined in \eqref {eq:S_i+1}, \eqref{eq:E_i+1} and \eqref{eq:Ak}, respectively, with $f:=f^{i}$ and $S:=S_{i-1, k_{i-1}-1}$. 
Here, $A_{i,k_i}$ is the event that the maximum load over $S_{i, k_{i}-1}=S_{k_{i}-1}^{f^{i}}$ of allocating $\lfloor\gamma_{i, k_{i}}\rfloor$ balls into $n$ bins using the $(d-i)$-thinning strategy $f^{i+1}$ is below $s_{i+1}$. To keep consistency, we formally set $s_0=(d-\epsilon)\ell$, $k_0=1$, $\gamma_{0, 1}=\rho n$, $S_{0, 0}=[n]$ and $A_{0, 1}=A$. 

We state the general step of the recursion as follows. In the $i$-th step, our task is to upper bound $\P(A_{i-1, k_{i-1}})$.
Analogous to \eqref{eq:inductive bound'}, we apply Lemma \ref{lem:reduction} with the parameters $\gamma=\lfloor\gamma_{i-1, k_{i-1}}\rfloor, S=S_{i-1, k_{i-1}-1}, s=s_{i}$ and $\{\gamma_{i, k_i}\}_{k_i\in\{0, \cdots, \lceil s_{i}\rceil\}}$ to obtain
\begin{align*}
\P(A_{i-1, k_{i-1}}) \leq \sum_{k_i=1}^{\lceil s_i\rceil}\P((A_{i, k_i}\cup B_{i, k_i})\cap E_{i, k_i-1}^c).
\end{align*}
Lemma \ref{lem:nonempty-bound} with $\theta=\theta_i$ and the set there $S=S_{i, k_i-1}$ yields
\begin{align*}
\P(B_{i, k_i}\cap E_{i, k_i-1}^c)\leq  2\exp\left(-\frac{\theta_{i}^2|S_{i,k_i-1}|}{2e^2}\right) \le 2\exp\left(-\frac{\theta_{i}^2\gamma_{i, k_i-1}}{2e^2}\right),
\end{align*}
where the second inequality again uses the fact that, under $E_{i, k_i-1}^c$, we have $|S_{i, k_i-1}|\geq \gamma_{i, k_i-1}$. Analogous to \eqref{eq:inductive bound}, we have
\begin{align}\label{eq:i-inductive bound}
\P(A_{i-1, k_{i-1}})\leq \sum_{k_i=1}^{\lceil s_i\rceil}\P(A_{i, k_i}\cap E_{i, k_i-1}^c)+2\sum_{k_i=1}^{\lceil s_i\rceil} \exp\left(-\frac{\theta_{i}^2\gamma_{i, k_i-1}}{2e^2}\right).
\end{align}
Putting together \eqref{eq:inductive bound} and \eqref{eq:i-inductive bound}, we obtain
\begin{align}\label{eq:final-inductive bound}
\P(A)\leq \sum_{k_1=1}^{\lceil s_1\rceil}\cdots\sum_{k_{d-1}=1}^{\lceil s_{d-1}\rceil}\P(A_{d-1, k_{d-1}}\cap E_{d-1, k_{d-1}-1}^c) +2\sum_{i=1}^{d-1}\sum_{k_1=1}^{\lceil s_1\rceil}\cdots\sum_{k_i=1}^{\lceil s_i\rceil}\exp\left(-\frac{\theta_i^2\gamma_{i, k_i-1}}{2e^2}\right).
\end{align}

We conclude the proof with the estimate of the right-hand side of \eqref{eq:final-inductive bound}. Firstly, we establish lower bounds for $\theta_i$ and $\gamma_{i,k_i}$.
Recall that $\gamma_{0, 1}=\rho n$, $\theta_i=w_i/n$,  $w_i=\gamma_{i-1, k_{i-1}}/s_i$ and $s_i=(d-\epsilon)\ell-\sum_{j=0}^{i-1}(k_j-1)$. For $i\in[d-1]$, we iterate equations \eqref{eq:def si} and \eqref{eq:gamma-i-iteration} to obtain
\begin{align}
\gamma_{i, k_i}&=n\prod_{j=1}^{i}\left(\frac{\theta_j}{4e}\right)^{k_j}, \label{eq:gamma-i}\\
\theta_{i+1}&=\frac{1}{s_{i+1}}\prod_{j=1}^{i}\left(\frac{\theta_j}{4e}\right)^{k_j} \label{eq:theta-i}.
\end{align}
Denote $\tilde{\rho}=\min\{1, \rho\}$. Towards obtaining a lower bound
for $\theta_i$, we iterate \eqref{eq:theta-i} to obtain
\begin{align*}
\frac{\theta_1}{4e}&=\frac{\rho}{4e(d-\epsilon)\ell}>\frac{\tilde{\rho}}{4ed\ell},\\
\frac{\theta_2}{4e}&>\frac{1}{4e(d-\epsilon)\ell}\left(\frac{\rho}{4e(d-\epsilon)\ell}\right)^{k_1}>\left(\frac{\tilde{\rho}}{4ed\ell}\right)^{k_1+1},
\end{align*}
and repeat this process with the simple fact $s_{i+1}<(d-\epsilon)\ell$ (see \eqref{eq:def si}) to inductively verify that for $i\geq 2$ we have
$$\frac{\theta_{i+1}}{4e}>\left(\frac{1}{4e(d-\epsilon)\ell}\right)^{\prod_{j=2}^{i}(k_j+1)} \left(\frac{\rho}{4e(d-\epsilon)\ell}\right)^{k_1\prod_{j=2}^{i}(k_j+1)}>\left(\frac{\tilde{\rho}}{4ed\ell}\right)^{\prod_{j=1}^{i}(k_j+1)}.$$
Hence, for any $i\in[d-1]$, we have
\begin{align}
\frac{\theta_{i+1}}{4e}&>\left(\frac{\tilde{\rho}}{4ed\ell}\right)^{\prod_{j=1}^{i}(k_j+1)}, \label{eq:theta-bound}\\
\gamma_{i, k_i}&>n\left(\frac{\tilde{\rho}}{4ed\ell}\right)^{\prod_{j=1}^{i}(k_j+2)}, \label{eq:gamma-bound}
\end{align}
where \eqref{eq:gamma-bound} follows from plugging \eqref{eq:theta-bound} into \eqref{eq:gamma-i}.

Next, we upper bound each term of the right-hand side of \eqref{eq:final-inductive bound}.
We begin with the easier 2nd term.

\textbf{The 2nd term of \eqref{eq:final-inductive bound}}.
Notice that $\theta_i$ and $\gamma_{i, k_i}$ are decreasing in both $i$ and $k_i$.
Using the lower bounds \eqref{eq:theta-bound} and \eqref{eq:gamma-bound}, we have
\begin{align}
\sum_{i=1}^{d-1}\sum_{k_1=1}^{\lceil s_1\rceil}\cdots\sum_{k_i=1}^{\lceil s_i\rceil}\exp\left(-\frac{\theta_i^2\gamma_{i, k_i-1}}{2e^2}\right)
&< d(dl)^{d-1}\exp\left(-\frac{\theta_{d-1}^2\gamma_{d-1, k_{d-1}-1}}{2e^2}\right)\notag\\
&< d(d\ell)^{d-1}\exp\left(-8n\left(\frac{\tilde{\rho}}{4ed\ell}\right)^{3\prod_{j=1}^{d-1}(k_j+2)}\right)\notag\\
&< d(d\ell)^{d-1}\exp\left(-8n\left(\frac{\tilde{\rho}}{4ed\ell}\right)^{3\left(\frac{d-\epsilon}{d-1}\ell+3\right)^{d-1}}\right)\notag\\
&< d(d\ell)^{d-1}\exp\left(-8n\left(\frac{\tilde{\rho}}{4ed\ell}\right)^{3(2\ell+3)^{d-1}}\right)\notag\\
&=e^{-n^{1-o(1)}}\label{eq: term2}.
\end{align}
In the 3rd inequality, we use the inequality of arithmetic and geometric means, and the fact that $\sum_{j=1}^{d-1}(k_j+2)\leq (d-\epsilon)\ell+3(d-1)$, which  follows from $s_d=(d-\epsilon)\ell-\sum_{j=0}^{d-1}(k_j-1)>0$. Observe that the quantity inside the exponential function is of order $n\ell^{-\ell^{d-1}}$ and the fact that $n\ge  \ell^{\ell^d}$ as stated in \eqref{eq:ell n relations}. Hence, the last identity holds for sufficiently large $n$ depending on $d$ and $\rho$ but not $\epsilon$.

\textbf{The 1st term of \eqref{eq:final-inductive bound}}. Notice that $A_{d-1, k_{d-1}}$ is the event that the maximum load over $S_{d-1, k_{d-1}-1}$ after throwing $\lfloor\gamma_{d-1, k_{d-1}}\rfloor$ balls into $n$ bins without retry (i.e., 1-thinning) is less than $s_d$. Apply Lemma \ref{lem:max-bound} with $\theta=\gamma_{d-1, k_{d-1}}/n$ and $S=S_{d-1, k_{d-1}-1}$ to obtain
$$
\P(A_{d-1, k_{d-1}}\cap E_{d-1, k_{d-1}-1}^c)\leq 2\exp\left(-\left(\frac{\gamma_{d-1, k_{d-1}}}{n}\right)^{s_d}\frac{\gamma_{d-1, k_{d-1}-1}}{e s_d!}\right),
$$
where we use the fact that, under $E_{d-1, k_{d-1}-1}^c$, we have $|S_{d-1, k_{d-1}-1}|\geq \gamma_{d-1, k_{d-1}-1}$.  Using \eqref{eq:gamma-bound} and the fact that $\gamma_{d-1, k_{d-1}}<\gamma_{d-1, k_{d-1}-1}$, we have
\begin{equation}\label{eq:last step bound}
\left(\frac{\gamma_{d-1, k_{d-1}}}{n}\right)^{s_d}\frac{\gamma_{d-1, k_{d-1}-1}}{e s_d!} >\frac{n}{e s_d!}\left(\frac{\tilde{\rho}}{4ed\ell}\right)^{(s_d+1)\prod_{j=1}^{d-1}(k_j+2)}.
\end{equation}
Using $s_d=(d-\epsilon)\ell-\sum_{j=0}^{d-1}(k_j-1)>0$ again, we have $(s_d+1)+\sum_{j=1}^{d-1}(k_j+2)\leq (d-\epsilon)\ell+3d-2$.
Apply the inequality of arithmetic and geometric means to the exponent of the right-hand side of \eqref{eq:last step bound} to obtain
$$
(s_d+1)\prod_{j=1}^{d-1}(k_j+2)\leq \left(\left(1-\frac{\epsilon}{d}\right)\ell+\frac{3d-2}{d}\right)^d< \left(1-\frac{2\epsilon}{3d}\right)\ell^d,
$$
where the 2nd inequality holds when $\epsilon>9d/\ell=\Theta\big((\log\log n/\log n)^{-1/d}\big)$. Combining with the fact that $es_d!<(d\ell)^{d\ell}$, we have
$$
\left(\frac{\gamma_{d-1, k_{d-1}+1}}{n}\right)^{s_d}\frac{\gamma_{d-1, k_{d-1}}}{e s_d!}>\frac{n}{(d\ell)^{d\ell}}\left(\frac{\tilde{\rho}}{4ed\ell}\right)^{-(1-2\epsilon/3d)\ell^d}>n\ell^{-(1-\epsilon/2d)\ell^d},
$$
where the 2nd inequality holds when $\epsilon>7d(\log(4ed)-\log\tilde{\rho})/\log\ell=\Theta(1/\log\log n)$. For such $n$, we obtain
\begin{align}
\sum_{k_1=1}^{\lceil s_1\rceil}\cdots\sum_{k_{d-1}=1}^{\lceil s_{d-1}\rceil}\P(A_{d-1, k_{d-1}}\cap E_{d-1, k_{d-1}-1}^c)
&\leq (d\ell)^{d-1}\exp\left(-n\ell^{-(1-\epsilon/2d)\ell^d}\right)\notag\\
&< (d\ell)^{d-1}\exp\left(-n^{\epsilon/2d}\right) \notag\\
&<\exp\left(-n^{\epsilon/3d}\right). \label{eq: term1}\
\end{align}
In the second inequality, we use the fact that $n\ge \ell^{\ell^d}$ as stated in \eqref{eq:ell n relations}. The last inequality holds when $\epsilon>6d(\log d+\log\log(d\ell))/\log n=\Theta(\log\log\log n/\log n)$.

The proposition follows from \eqref{eq:final-inductive bound}, \eqref{eq: term2} and \eqref{eq: term1}.
\end{proof}

\begin{rmk}
Similar to Proposition \ref{prop:upper bound}, the parameter $\epsilon$ in Proposition \ref{prop:lower bound} is also allowed to depend on $n$ and Proposition \ref{prop:lower bound} is valid as long as $\epsilon>C/\log\log n$ for an appropriate absolute constant $C$. With this choice of $\epsilon$, the lower bound in Theorem \ref{thm:main} is an immediate consequence of Proposition \ref{prop:lower bound}.
\end{rmk}

\section*{Acknowledgement}
We thank Ori Gurel-Gurevich for reading an early manuscript. We are grateful to the anonymous referee for valuable comments.

\end{document}